\theoremstyle{theorem}
\newtheorem{theorem}{Theorem}[section]
\newtheorem{proposition}[theorem]{Proposition}
\newtheorem{conjecture}[theorem]{Conjecture}
\newtheorem*{claim}{Claim}
\newtheorem{lemma}[theorem]{Lemma}
\theoremstyle{remark}
\theoremstyle{definition}
\newtheorem{def.}[theorem]{Definition}
\newtheorem*{acknowledgments}{Acknowledgments}
\def \Z {\mathbb{Z}}
\newcommand\nc{\newcommand}
\nc{\Tors}{\operatorname{Tors}}
\nc{\TorsH}{\Tors H}
\nc\congto{\overset{\cong}{\to}}
\nc\Spin{\operatorname{Spin}}
\nc\Span{\operatorname{Span}}
\nc\SsS{(\Sigma,s_{\Sigma })}
\nc\G{{\mathcal{G}}}
\nc\id{{\operatorname{id}}}
\nc\Ker{\operatorname{Ker}}
\nc\opint{\operatorname{int}}
\title[Borromean surgery equivalence of spin $3$-manifolds]{Borromean surgery equivalence of spin $3$-manifolds with boundary}
\author[Eva Contreras]{Eva Contreras}
\address{Universit\"at Z\"urich, Winterthurerstr. 190
CH-8057 Z\"urich, Switzerland}
\email{eva.tudor@math.uzh.ch}
\author[Kazuo Habiro]{Kazuo Habiro}
\address{Research Institute for Mathematical Sciences, Kyoto
University, Kyoto 606-8502, Japan}
\email{habiro@kurims.kyoto-u.ac.jp}
\subjclass[2010]{Primary 57N10; Secondary 57R15}
\date{June 14, 2013 (First version: January 22, 2013)}
\begin{document}

\maketitle

\begin{abstract}
  Matveev introduced Borromean surgery on $3$-manifolds, and proved
  that the equivalence relation on closed, oriented $3$-manifolds
  generated by Borromean surgeries is characterized by the first
  homology group and the torsion linking pairing. Massuyeau
  generalized this result to closed, spin $3$-manifolds, and the second
  author to compact, oriented $3$-manifolds with boundary.
 
  In this paper we give a partial generalization of these results to compact, spin
  $3$-manifolds with boundary.
\end{abstract}

\section{Introduction}

Matveev \cite{Matveev:1986} introduced an equivalence relation on
$3$-manifolds generated by {\em Borromean surgeries}.  This surgery
transformation removes a genus 3 handlebody from a $3$-manifold and glues it
back in a nontrivial, but homologically trivial way.  Thus, Borromean
surgeries preserve the homology groups of $3$-manifolds, and moreover
the torsion linking pairings.  Matveev gave the following
characterization of this equivalence relation.

\begin{theorem}[Matveev \cite{Matveev:1986}]
\label{Matveev}
Two closed, oriented $3$-manifolds $M$ and $M'$ are related by a
sequence of Borromean surgeries if and only if
there is an isomorphism
$f\colon H_1(M;\Z)\to H_1(M';\Z)$ inducing isomorphism on the
torsion linking pairings.
\end{theorem}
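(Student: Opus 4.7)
I plan to prove the two implications separately. For the forward direction, a Borromean surgery removes a genus-$3$ handlebody $V\subset M$ and reglues the exterior of the Borromean rings (which is $\Z$-homologically a handlebody) via the same boundary identification. Since the Borromean rings have pairwise linking numbers zero, a Mayer--Vietoris calculation yields an isomorphism $H_1(M;\Z)\cong H_1(M';\Z)$. To check that the torsion linking pairing is preserved, I would represent each torsion class by a loop disjoint from $V$ and verify that rational $2$-chains bounding such cycles can be chosen so that their mutual intersection pattern is unchanged by the replacement of $V$.

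\textbf{Sufficiency.} Present $M=S^3_L$ and $M'=S^3_{L'}$ by Dehn surgery on framed links $L,L'\subset S^3$ with linking matrices $A$ and $A'$. These matrices present $H_1$, and their inverses (restricted rationally to the torsion subgroups) compute the linking pairings. A classical algebraic fact on stable equivalence of integer symmetric bilinear forms implies that any isomorphism $f$ of linking pairings is, after stabilising both presentations by $\pm 1$-framed split unknots, realised by a congruence $A'=PAP^{T}$ with $P\in GL_n(\Z)$; the stabilisations on the manifold side are Kirby blow-ups that leave $M$ and $M'$ unchanged.

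\textbf{Key geometric input.} The crucial ingredient is that a $\Delta$-move (equivalently, a $Y$-clasper surgery) on the surgery link $L\subset S^3$ corresponds to a Borromean surgery on the surgered $3$-manifold $S^3_L$. Combined with the Murakami--Nakanishi theorem that two links are $\Delta$-equivalent if and only if their pairwise linking numbers agree, this lets me transform $L$ into any framed link with the same linking matrix, at the cost of only Borromean surgeries on $M$. It then remains to realise the change of basis $P$ by a sequence of Kirby handle slides on the stabilised surgery presentation, each of which alters the linking matrix by an elementary congruence while preserving the underlying $3$-manifold.

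\textbf{Main obstacle.} The hardest point is the algebraic-to-geometric dictionary: one must know a normal form theorem classifying integer symmetric bilinear forms up to stable congruence, and show that every congruence $P$ realising a given linking-form isomorphism can be executed by an explicit sequence of Kirby moves (providing row/column operations on the matrix) interleaved with $\Delta$-moves (providing Borromean surgeries on the manifold). The subtlety is that Kirby moves change both the linking matrix \emph{and} the link, so one must track both simultaneously, and self-framings require separate care beyond what $\Delta$-equivalence provides. This bookkeeping, rather than the topology of a single Borromean surgery, is where the real work lies.
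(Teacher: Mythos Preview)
The paper does not contain a proof of this statement: Theorem~\ref{Matveev} is quoted from \cite{Matveev:1986} as background in the introduction, with no argument supplied. So there is nothing in the paper to compare your proposal against.

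That said, your outline is a recognisable route to Matveev's theorem, essentially the one later reworked by Massuyeau and others. The necessity direction is standard. For sufficiency, the combination of (i) stable equivalence of integral linking matrices realising a given linking-form isomorphism, (ii) Kirby moves to implement elementary congruences, and (iii) the Murakami--Nakanishi $\Delta$-equivalence theorem to adjust the link once the linking matrix is fixed, is a correct strategy. Two points deserve more care than your sketch gives them. First, when $H_1(M)$ has free part the linking matrix is singular, so the ``classical algebraic fact'' you invoke must be stated for forms with radical, not just for nondegenerate forms; the reduction typically separates the free summand (realised by $0$-framed unknots) from the torsion part. Second, as you note, $\Delta$-moves control only off-diagonal linking numbers, so matching self-framings requires an additional argument (e.g.\ crossing changes realised by further $\Delta$-moves together with $\pm1$ stabilisations). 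These are genuine but well-understood technicalities, and your identification of the algebraic-to-geometric dictionary as the main obstacle is accurate.
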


Massuyeau \cite{Massuyeau:2003} showed that Borromean surgery induces
a natural correspondence on spin structures, and thus can be regarded
as a surgery move on spin $3$-manifolds.
He generalized Theorem \ref{Matveev} as follows.

\begin{theorem}[Massuyeau \cite{Massuyeau:2003}]
  \label{massuyeau}
  Two closed spin $3$-manifolds $M$ and $M'$ are related by a sequence
  of Borromean surgeries if and only if there is an isomorphism $f\colon
  H_1(M;\Z)\to H_1(M';\Z)$ inducing isomorphism on the torsion linking
  pairings, and the Rochlin invariants of $M$ and $M'$ are congruent
  modulo $8$.
\end{theorem}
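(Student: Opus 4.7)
The plan is to split the biconditional into its two directions, relying on Theorem~\ref{Matveev} for the underlying unspun manifolds.

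\textbf{Forward direction.} I would realize a single Borromean surgery as a compact spin $4$-cobordism $W$ from $(M,s_M)$ to $(M',s_{M'})$, obtained by attaching to $M\times[0,1]$ the trace of the associated algebraically split, zero-framed surgery link. The first step is to verify that the given spin structures extend over $W$: since the surgery link is algebraically split and zero-framed the framing obstruction vanishes, and Massuyeau's natural correspondence between spin structures supplies the canonical extension. I would then compute the intersection form of~$W$ from the linking matrix and show that $\sigma(W)$ is divisible by~$8$. Novikov additivity of the signature combined with the defining relation $\mu\equiv\sigma\pmod{16}$ for the Rochlin invariant then yields $\mu(M,s_M)\equiv\mu(M',s_{M'})\pmod 8$. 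Invariance of $H_1$ and of the torsion linking pairing comes for free from Theorem~\ref{Matveev}.

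\textbf{Converse.} I would first apply Theorem~\ref{Matveev} to obtain a sequence of Borromean surgeries taking $M$ to a $3$-manifold diffeomorphic to $M'$ and realizing $f$ on $H_1$. This endows $M'$ with some a priori different spin structure $\tilde s$, so the task reduces to connecting $(M',\tilde s)$ and $(M',s_{M'})$ by a further sequence of spin Borromean surgeries. The reduction rests on two realization statements: (a) every class $\alpha\in H^1(M';\Z/2)$ is the spin-structure shift of a Borromean-surgery sequence whose cumulative effect on the underlying $3$-manifold is the identity; and (b) every multiple of~$8$ is the Rochlin-invariant shift of such a sequence that, moreover, preserves the spin structure on $M'$. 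After applying (a) the two spin structures on $M'$ match, and the residual discrepancy in Rochlin invariants is a multiple of~$8$ by the forward direction and the hypothesis, hence is absorbed by (b).

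\textbf{Main obstacle.} The hard part is proving (a) and (b). For (a), one needs a catalogue of Borromean moves localized inside a handlebody together with an explicit description of their action on Massuyeau's identification of spin structures with quadratic refinements of the linking pairing. For (b), a natural building block is a Borromean surgery performed inside a ball in $S^3$ that produces a $\Z$-homology sphere of Rochlin invariant~$8$ (for instance a Brieskorn sphere); together with its inverse this generates $8\Z$. Once these local models are in place, combining them so as to realize a prescribed joint effect on spin structure and Rochlin invariant is a matter of combinatorial bookkeeping.
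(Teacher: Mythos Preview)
This theorem is not proved in the paper at all: it is Massuyeau's result, quoted from \cite{Massuyeau:2003} as background and then used as a black box (for instance in the well-definedness of $R_8$ in Section~\ref{sec:rochl-invar-pairs} and in the proof of $(1)\Rightarrow(2)$ in Section~\ref{sec:proof-theor-refm}). There is therefore no ``paper's own proof'' to compare your proposal against; any proof you supply would have to be checked against Massuyeau's original argument, not against anything in this paper.

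That said, your outline is broadly in the right spirit but is only a sketch, and you correctly identify the realization statements (a) and (b) as the crux. A few specific comments. In the forward direction, the trace of the six-component surgery link associated to a $Y$-clasper does give a cobordism, but asserting that its signature is divisible by~$8$ requires an actual computation of the intersection form (or an appeal to the $E_8$-plumbing description of Borromean surgery); you have not indicated how this goes. In the converse, your step~(a) is closely related to what the present paper proves as Proposition~\ref{r6} (twisting a spin structure along a closed surface is realized by $Y$-equivalence), so that ingredient is genuinely available, though you would still need to check that every class in $H^1(M';\Z_2)$ arises as the Poincar\'e dual of an embedded closed orientable surface. For~(b), a single $Y$-surgery in a ball does change the Rochlin invariant by $\pm 8$ (one model yields the Poincar\'e sphere), so the building block exists; but you should also explain why such a local move can be undone on the level of the underlying manifold while retaining the Rochlin shift, or else argue via connected sums with homology spheres. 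As written, the proposal is a plausible roadmap rather than a proof.
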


In a paper in preparation \cite{Habiro}, the second author generalizes
Matveev's theorem to compact $3$-manifolds with boundary (see Theorem
\ref{habiro} below).

In the present paper, we attempt to generalize the above results to
compact spin $3$-manifolds with boundary.

After defining the necessary ingredients in 
Sections \ref{ysurgery} and \ref{rochlin},
our main result is stated in Theorem \ref{r10}.

\begin{acknowledgments}
  The second author was partially supported by JSPS, Grant-in-Aid for
  Scientific Research (C) 24540077.
  
  The first author would like to thank Anna Beliakova for guidance and
  support, to Christian Blanchet, and to Gw\'{e}na\"{e}l Massuyeau for
  several helpful discussions.  
\end{acknowledgments}

\section{$Y$-surgeries on $3$-manifolds}
\label{ysurgery}

Unless otherwise specified, we will make the following assumptions
in the rest of the paper.  All manifolds are compact and oriented.
Moreover, all $3$-manifolds are connected. All homeomorphisms are
orientation-preserving.  The (co)homology groups with coefficient
group unspecified are assumed to be with coefficients in $\mathbb{Z}$.

\subsection{$Y$-surgeries and $Y$-equivalence}
\label{sec:y-surgeries-y}

Borromean surgery is equivalent to $Y$-surgery used in the theory of
finite type $3$-manifold invariants in the sense of Goussarov and the
second author \cite{Goussarov:1999,Habiro:2000}.

A \emph{$Y$-clasper} in a $3$-manifold $M$ is a connected surface (of genus $0$,
with $4$ boundary components) embedded in $M$,
which is decomposed into one disk, three bands and three annuli as
depicted in Figure ~\ref{fig:Y}.
\begin{figure}[t]
\begin{center}
\includegraphics[height=3.5cm,width=4cm]{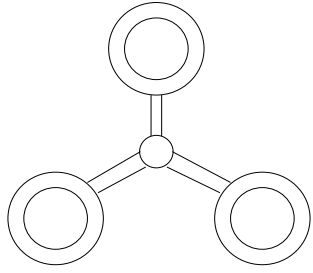} 
\caption{A $Y$-clasper.}
\label{fig:Y}
\end{center}
\end{figure}

We associate to a $Y$-clasper $G$ in $M$ a $6$-component framed link
$L_G$ contained in a regular neighborhood of $G$ in $M$ as depicted in
Figure~\ref{fig:L}.
\begin{figure}[t]
\begin{center}
\includegraphics[height=4cm]{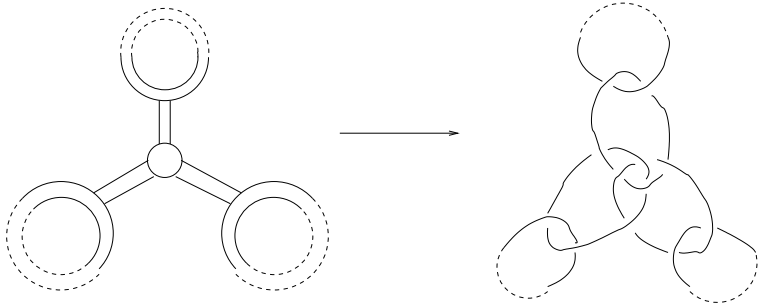} 
\caption{How to replace a $Y$-clasper with a $6$-component framed
  link. Here the framings of the three inner components are zero and
  the framings of the three outer components are determined by the
  annuli in the $Y$-clasper.}
\label{fig:L}
\end{center}
\end{figure}
Surgery along the $Y$-clasper $G$ is defined to be surgery along the
framed link $L_G$.  The result $M_{L_G}$ from $M$ of surgery along
$L_G$ is called the result of {\em surgery along the $Y$-clasper} $G$ and is
denoted by $M_G$.

By {\em $Y$-surgery} we mean surgery along a $Y$-clasper.  Thus, we
say that a $3$-manifold $M'$ is obtained from another $3$-manifold $M$
by a $Y$-surgery if there is a $Y$-clasper $G$ in $M$ such that the result
of surgery, $M_G$, is homeomorphic to $M'$.  It is well-known that
this relation is symmetric, i.e., if $M'$ is obtained from $M$ by a
$Y$-surgery then, conversely, $M$ can be obtained from $M'$ by a
$Y$-surgery.

The {\em $Y$-equivalence} is the equivalence relation on $3$-manifolds
generated by $Y$-surgeries.

\subsection{$\Sigma$-bordered $3$-manifolds}
Throughout the paper, we fix a closed surface $\Sigma$, which may have
arbitrary finite number of components.  In this paper, we consider
$3$-manifolds whose boundaries are parameterized by $\Sigma$.

A {\em $\Sigma$-bordered} $3$-manifold is a pair $(M,\phi)$ of a
compact, connected $3$-manifold $M$ and a homeomorphism 
$\phi\colon\Sigma\congto\partial M$.

Two $\Sigma$-bordered $3$-manifolds $(M,\phi)$ and $(M',\phi')$ are
said to be {\em homeomorphic} if there is a homeomorphism $\Phi\colon
M\congto M'$ such that $(\Phi|_{\partial M})\circ\phi=\phi'$.

\subsection{$Y$-equivalence for $\Sigma$-bordered $3$-manifolds}

The notions of $Y$-surgery and $Y$-equivalence extend to
$\Sigma$-bordered $3$-manifolds in a natural way.

For a $\Sigma$-bordered $3$-manifold
$(M,\phi\colon\Sigma\congto\partial M)$ and a $Y$-clasper $G$ in $M$,
the result of surgery $M_G$ has an obvious boundary parameterization
$\phi_G\colon \Sigma\congto \partial M_G$ induced by $\phi$.  Thus
surgery along a $Y$-clasper $G$ in a $\Sigma$-bordered
$3$-manifold $(M,\phi)$ yields a $\Sigma$-bordered $3$-manifold
$(M,\phi)_G:=(M_G,\phi_G)$.  Two $\Sigma$-bordered $3$-manifolds
$(M,\phi)$ and $(M',\phi')$ are said to be related by a $Y$-surgery if there
is a $Y$-clasper $G$ in $M$ such that 
$(M,\phi)_G$ is homeomorphic to $(M',\phi')$.
The $Y$-equivalence on $\Sigma$-bordered $3$-manifolds is
generated by $Y$-surgeries.

The following well known characterization of the $Y$-equivalence is useful.

\begin{lemma}
  \label{r1}
  Two $\Sigma$-bordered $3$-manifolds $(M,\phi)$ and $(M',\phi')$ are
  $Y$-equivalent if and only if there are finitely many, mutually
  disjoint $Y$-claspers $G_1,\ldots,G_n$ ($n\ge0$) in $M$ such that
  the result of surgery, $(M,\phi)_{G_1,\ldots,G_n}$ is homeomorphic
  to $(M',\phi')$.
\end{lemma}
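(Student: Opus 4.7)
The ``if'' direction is immediate: if $G_1,\dots,G_n$ are mutually disjoint $Y$-claspers in $M$, they admit pairwise disjoint regular neighborhoods, and the simultaneous surgery $(M,\phi)_{G_1,\dots,G_n}$ coincides with the result of performing the single $Y$-surgeries along the $G_i$ in any chosen order. Each such step produces a $Y$-equivalent $\Sigma$-bordered $3$-manifold, so the two ends are $Y$-equivalent.

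For the ``only if'' direction, I would proceed by induction on the number $n$ of $Y$-surgeries in a realizing sequence
\[
(M,\phi)=(N_0,\psi_0)\rightsquigarrow(N_1,\psi_1)\rightsquigarrow\cdots\rightsquigarrow(N_n,\psi_n)\cong(M',\phi'),
\]
where $(N_i,\psi_i)=(N_{i-1},\psi_{i-1})_{K_i}$ for a single $Y$-clasper $K_i\subset N_{i-1}$. The cases $n=0,1$ are clear. For the inductive step, the induction hypothesis applied to the first $n-1$ surgeries yields mutually disjoint $Y$-claspers $G_1,\dots,G_k\subset M$ together with a homeomorphism $\Psi\colon(M,\phi)_{G_1,\dots,G_k}\congto(N_{n-1},\psi_{n-1})$. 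Pulling $K_n$ back along $\Psi$ gives a $Y$-clasper $H$ in $(M,\phi)_{G_1,\dots,G_k}$, and it suffices to realize the further surgery along $H$ by $Y$-claspers in $M$ disjoint from $G_1,\dots,G_k$.

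The central step is thus a push-off argument. Write $V\subset M$ for a regular neighborhood of $G_1\cup\cdots\cup G_k$ and $V'\subset(M,\phi)_{G_1,\dots,G_k}$ for the handlebody produced by the $G_i$-surgery, so that $M\setminus\opint V$ is canonically identified with the common exterior $(M,\phi)_{G_1,\dots,G_k}\setminus\opint V'$. The plan is to show that, after possibly replacing $H$ by a disjoint union of finitely many $Y$-claspers with the same combined surgery effect, all pieces can be arranged to lie in this common exterior, where they become $Y$-claspers in $M$ disjoint from the $G_i$'s, closing the induction. The main obstacle is exactly this push-off: a $Y$-clasper is a $2$-complex while $V'$ is $3$-dimensional, so a single $Y$-clasper cannot in general be isotoped off $V'$ by dimension alone. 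I would handle this by standard cutting moves from the calculus of claspers (Goussarov, Habiro): put $H$ in general position with respect to $\partial V'$ so that $H\cap\partial V'$ is a $1$-complex, and iteratively cut $H$ along this intersection, replacing each piece of $H$ sitting inside $V'$ by a disjoint union of $Y$-claspers lying in the exterior of $V'$ whose combined surgery agrees with the original one. After finitely many such steps, no piece remains inside $V'$, and the resulting $Y$-claspers lie in $M\setminus\opint V\subset M$, as required.
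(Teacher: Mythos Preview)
The paper states this lemma as ``well known'' and gives no proof, so there is nothing to compare against directly. Your overall architecture (the ``if'' direction and the induction on the length of a $Y$-surgery sequence for ``only if'') is the standard one and is fine. The issue is in the push-off step.

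You correctly locate the crux: having realized the first $n-1$ surgeries by disjoint $G_1,\dots,G_k\subset M$, you must push the last clasper $H\subset (M)_{G_1,\dots,G_k}$ off the surgered region $V'$. But your assessment that ``a single $Y$-clasper cannot in general be isotoped off $V'$ by dimension alone'' is based on the wrong dimension count, and your proposed remedy via unspecified ``cutting moves from the calculus of claspers'' is both unnecessary and unjustified as written: you never say which moves, nor why a piece of $H$ trapped inside $V'$ can be traded for claspers in the exterior with the same surgery effect.

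The point you are missing is that a $Y$-clasper has a genus-$3$ handlebody regular neighborhood, hence deformation retracts to a $1$-dimensional spine; likewise the surgered region $V'$ is a handlebody with a $1$-dimensional spine $\Gamma_0$. An ambient isotopy of $N_1:=(M)_{G_1,\dots,G_k}$ shrinks $V'$ into an arbitrarily thin neighborhood of $\Gamma_0$ (isotopy extension), and this does not affect the identification $N_1\setminus V'\cong M\setminus V$. Now isotope $H$ (and, in the inductive version, the whole disjoint family $H_2,\dots,H_m$) into thin neighborhoods of their spines and perturb those $1$-complexes off $\Gamma_0$; since $1+1<3$ this is ordinary general position. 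After these isotopies the claspers lie in $N_1\setminus V'=M\setminus V$, i.e.\ they are $Y$-claspers in $M$ disjoint from $G_1,\dots,G_k$, and the induction closes. No clasper calculus beyond isotopy-invariance of the surgery result is needed.
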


\subsection{Homology isomorphisms between compact $3$-manifolds}
\label{sec:enhanc-homol-isom}
Let $(M,\phi)$ and $(M',\phi')$ be $\Sigma$-bordered $3$-manifolds.
Set 
\begin{gather*}
  \delta:=\phi'\circ\phi^{-1}\colon \partial M\congto \partial M'.
\end{gather*}

A \emph{homology isomorphism}\footnote{In \cite{Habiro}, this is
  called ``full enhanced homology isomorphism''.  In this paper, we
  call it ``homology isomorphism'' for simplicity.} from $(M,\phi)$ to
  $(M',\phi')$ (or a 
homology isomorphism from $M$ to $M'$ along $\delta$) is an
isomorphism $f=(f_i, \overline{f_i})_{i=0,1,2,3}$ of the homology
exact sequences of pairs $(M,\partial M)$ and $(M',\partial M')$
\begin{equation*}
\xymatrix@R=6mm@C=3mm{
\cdots \ar[r]& H_i(\partial M)\ar[d]^{\delta_*} \ar[r] & H_i(M) \ar[r] \ar[d]^{f_i} & H_i(M,\partial M) \ar[r] \ar[d]^{\overline{f}_i}& H_{i-1}(\partial M) \ar[r] \ar[d]^{\delta_*}& \cdots\\
\cdots \ar[r]& H_i(\partial M') \ar[r] & H_i(M') \ar[r] & H_i(M',\partial M') \ar[r] & H_{i-1}(\partial M') \ar[r]& \cdots \\
}
\end{equation*}
satisfying the following properties:
\begin{enumerate}
 \item [(i)]$f_0([pt])=[pt]$;
\item [(ii)]$f_i$ and $\overline{f_i}$ are compatible with the
intersection forms, i.e., for $i=0,1,2,3$, the square commutes:
\begin{equation*}
\xymatrix@R=6mm@C=10mm{
H_i(M)\times H_{3-i}(M,\partial M)\ar[d]^{f_i\times \overline{f}_{3-i}} \ar[r]^-{\langle,\rangle_M} &  \mathbb{Z} \ar@{=}[d]\\
H_i(M')\times H_{3-i}(M',\partial M')\ar[r]^-{\langle,\rangle_{M'}} &  \mathbb{Z}
}
\end{equation*}

Here $\langle,\rangle_M$ and $\langle,\rangle_{M'}$ denote the intersection forms.

\item [(iii)]$f_1$ and $\overline{f_1}$ are compatible with the
torsion linking pairings, i.e., the square commutes:
\begin{equation*}
\xymatrix@R=6mm@C=8mm{
\TorsH_1(M)\times \TorsH_{1}(M,\partial M) \ar[r]^-{\tau_M}
\ar[d]^{\Tors f_1\times \Tors\overline{f}_1} &  \mathbb{Q}\diagup \mathbb{Z}\ar@{=}[d]\\
\TorsH_1(M')\times \TorsH_1(M',\partial M')\ar[r]^-{\tau_{M'}} & \mathbb{Q}\diagup \mathbb{Z}.
}
\end{equation*}
Here $\Tors$ denotes torsion part, and $\tau_M$ denotes the
torsion linking pairing of $M$.
\end{enumerate}

The classification of compact $3$-manifolds up to $Y$-equivalence is
given by the following result.

\begin{theorem}[\cite{Habiro}]
\label{habiro}
Let $\Sigma$ be a closed surface, and let $(M,\phi)$ and $(M',\phi')$
be two $\Sigma$-bordered $3$-manifolds.  Then the
following conditions are equivalent.
\begin{enumerate}
\item $(M,\phi)$ and $(M',\phi')$ are $Y$-equivalent.
\item There is a homology isomorphism from $(M,\phi)$ to $(M',\phi')$.
\end{enumerate}
\end{theorem}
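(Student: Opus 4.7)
The plan is to prove both implications, with (1) $\Rightarrow$ (2) reducing to a local Mayer--Vietoris computation and (2) $\Rightarrow$ (1) requiring surgery-theoretic input in the spirit of Matveev and Murakami--Nakanishi.

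For (1) $\Rightarrow$ (2), thanks to Lemma \ref{r1} and the fact that homology isomorphisms compose, it will be enough to handle a single $Y$-surgery $(M',\phi') = (M,\phi)_G$ and construct a natural $f$. I will take a regular neighborhood $N$ of the $Y$-clasper $G$, which is a genus-$3$ handlebody, and identify $M'$ with $(M \setminus \opint N) \cup_{\partial N} N'$ for a second genus-$3$ handlebody $N'$ glued via the surgery diffeomorphism of $\partial N$. Since this diffeomorphism is known to act as the identity on $H_*(\partial N)$ at the level needed, a Mayer--Vietoris comparison will show that the inclusions of $M \setminus \opint N$ into $M$ and $M'$ induce compatible isomorphisms on the relative homology long exact sequences, thereby producing $f = (f_i, \overline{f_i})$. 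Conditions (i)--(iii) will follow because representatives of the relevant cycles can be pushed off $N$ into the common piece $M \setminus \opint N$, on which the intersection forms and the torsion linking pairings of $M$ and $M'$ tautologically agree, and the boundary identification intertwines everything with $\delta := \phi' \circ \phi^{-1}$.

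For (2) $\Rightarrow$ (1), suppose $f$ is a homology isomorphism from $(M,\phi)$ to $(M',\phi')$. The strategy will have three stages. First, a relative Lickorish--Wallace theorem will produce a framed link $L \subset \opint M$ together with a homeomorphism $M_L \congto M'$ extending $\delta$; the cobordism data required to apply such a theorem is encoded in the existence of $f$. Second, I will simplify $L$ by handle-slides, isotopies, and stabilizations until it becomes \emph{algebraically split with zero framings} in $M$, i.e., every component is null-homologous in $M$, every pairwise linking number vanishes, and every framing is $0$. Here the pairing part of $f$---compatibility with the intersection forms on $H_*(M) \times H_{3-*}(M,\partial M)$ and with the torsion linking pairings on $\TorsH_1(M) \times \TorsH_1(M,\partial M)$---will serve to kill the $\mathbb{Z}$- and $\mathbb{Q}/\mathbb{Z}$-valued obstructions hidden in the linking matrix of $L$. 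Third, I will invoke the Murakami--Nakanishi/Matveev principle that surgery along an algebraically split, zero-framed link can be realized by a finite sequence of $Y$-surgeries, since any crossing change on such a link is the effect of a single $Y$-surgery and crossing changes suffice to unlink an algebraically split link up to isotopy. Combined with Lemma \ref{r1}, this will produce disjoint $Y$-claspers in $\opint M$ whose simultaneous surgery recovers $(M',\phi')$.

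The main obstacle will be Stage~2, where the abstract pairing-level data of $f$ (in particular the torsion linking pairing compatibility, which is the most delicate ingredient carried over from the closed case) must be translated into concrete handle-slide moves on $L$, all while respecting the boundary identification $\delta$ and the relative pairings interacting with $\partial M$. Stages 1 and 3 should then reduce to standard applications of relative surgery presentations and clasper calculus, respectively.
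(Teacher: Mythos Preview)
The paper does not prove this theorem. It is quoted from \cite{Habiro}, listed in the bibliography as ``in preparation,'' and the authors explicitly note that their argument for Theorem~\ref{r10} ``does not use definitions and results given in \cite{Habiro}, which is not available when we are writing the present paper.'' There is therefore no proof in the paper to compare your proposal against.

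On the substance of your outline: the strategy is the expected Matveev-style one, and the direction $(1)\Rightarrow(2)$ is fine as sketched. For $(2)\Rightarrow(1)$ two points deserve care. In Stage~1, the existence of a framed link $L\subset\opint M$ with $M_L\cong M'$ rel boundary does not require $f$ at all; it follows from the relative Lickorish--Wallace theorem for any two $\Sigma$-bordered $3$-manifolds, so $f$ should really enter only at Stage~2. More seriously, in Stage~3 your appeal to ``crossing changes suffice to unlink an algebraically split link'' is not valid in a general $3$-manifold: null-homologous components need not be null-homotopic, so they cannot in general be isotoped to unknots by crossing changes alone. The usual route here is instead to reduce, via Kirby moves and the homology-isomorphism hypothesis, to surgery on $\pm1$-framed knots that are null-homologous in the complement of the remaining components, and then invoke Matveev's lemma that each such surgery is itself a $Y$-equivalence (this is exactly the mechanism the present paper uses inside the proof of Proposition~\ref{r6}). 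Your Stage~2 would then need to produce a diagonal linking matrix with $\pm1$ entries rather than zeros, which is where both the intersection-form and torsion-linking compatibilities of $f$ are genuinely consumed.
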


For closed $3$-manifolds, Theorem \ref{habiro} is equivalent to
Matveev's theorem (Theorem \ref{Matveev}).

\section{$Y$-surgery on spin $3$-manifolds}
\label{rochlin}

\subsection{Spin structures}

For an oriented manifold $M$ with vanishing second Stiefel-Whitney
class, let $\Spin(M)$ denote the set of spin structures on $M$.  

It is well known that $\Spin(M)$ is affine over $H^1(M; \Z_2)$, i.e,
acted by $H^1(M; \Z_2)$ freely and transitively
\begin{gather*}
  \Spin(M)\times H^1(M; \Z_2) \to \Spin (M),\quad (s,c)\mapsto s+c.
\end{gather*}

An embedding $f\colon M'\hookrightarrow M$ of a manifold $M'$ into $M$
induces a map
\begin{gather*}
  i^*\colon \Spin(M)\to \Spin(M').
\end{gather*}
If $i$ is an inclusion map, $i^*(s)$, $s\in\Spin(M)$, is denoted also
by $s|_{M'}$.

\subsection{$Y$-surgery and spin structures}

Let $G$ be a $Y$-clasper in a $3$-manifold $M$.  Let $N(G)$ be a
regular neighborhood of $G$ in $M$.  Note that the result of surgery,
$M_G$, can be identified with the manifold
\begin{gather*}
  (M\setminus \operatorname{int}N(G))\cup_{\partial N(G)}N(G)_G
\end{gather*}
obtained by gluing $M\setminus \operatorname{int}N(G)$ with
$N(G)_G$ along $\partial N(G)$.

As is proved by Massuyeau \cite{Massuyeau:2003}, for a spin
structure $s\in\Spin(M)$, there is a unique spin structure $s_G$ on
$M_G$ such that 
\begin{gather*}
  s_G|_{M\setminus \operatorname{int}N(G)}=s|_{M\setminus \operatorname{int}N(G)}.
\end{gather*}
This gives a bijection
\begin{gather*}
  \Spin(M)\congto \Spin(M_G),\quad s\longmapsto s_G.
\end{gather*}
The spin $3$-manifold $(M_G,s_G)$ is called the result of surgery on the spin
$3$-manifold $(M,s)$ along $G$.

As in Section \ref{sec:y-surgeries-y}, the {\em $Y$-equivalence} on spin
$3$-manifolds is the equivalence relation generated by $Y$-surgery.

\subsection{Twisting a spin structure along an orientable surface}
\label{sec:twist-spin-struct}

Let $(M,s)$ be a spin $3$-manifold possibly with boundary, and let $T$
be an orientable surface properly embedded in $M$.  Then we can {\em
twist} the spin structure $s$ along $T$.  More precisely, we can
define a new spin structure
\begin{gather*}
  s*T= s+[T]^!\in\Spin(M),
\end{gather*}
where $[T]^!\in H^1(M;\Z_2)$ is the Poincar\'{e} dual of $[T]\in
H_2(M,\partial M;\Z_2)$.  (One can consider similar operation when $T$
is non-orientable, but  we do not need it in this paper.)

Note that twisting along a closed surface preserves the restriction of the
spin structure to the boundary.

\begin{proposition}
  \label{r6}
If $T$ is a closed, orientable surface in a spin $3$-manifold $(M,s)$,
then $(M,s*T)$ is $Y$-equivalent to $(M,s)$.
\end{proposition}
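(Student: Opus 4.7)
I plan to realize the transformation $s \mapsto s*T$ by a sequence of $Y$-surgeries on the underlying $3$-manifold. By Lemma~\ref{r1}, it suffices to produce a disjoint family $G_1,\ldots,G_n$ of $Y$-claspers in $M$ together with a homeomorphism $h\colon M_{G_1,\ldots,G_n}\congto M$ over $\partial M$ such that, under Massuyeau's spin-structure correspondence, $h^*(s_{G_1,\ldots,G_n})=s*T$. Since $T$ is closed and orientable, it admits a product regular neighborhood $N\cong T\times[-1,1]\subset M$, and I will place all the claspers inside $N$. By the locality of both $Y$-surgery and the spin-structure correspondence, the entire computation reduces to a statement inside $N$ relative to $\partial N$.

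\textbf{Reduction to genus $1$.} Since the twist is additive on disjoint components, namely $s*(T'\sqcup T'')=(s*T')*T''$, I may assume $T$ is connected of some genus $g$. I then aim to reduce to $g=1$: in $N\cong T\times I$ one may perform ambient surgery on $T$ along $g$ disjoint non-separating curves forming part of a symplectic basis of $H_1(T;\Z)$, replacing $T$ by a disjoint collection of tori $T_1,\ldots,T_g$ inside $N$ whose classes add up to $[T]$ in $H_2(M;\Z_2)$. Consequently $s*T=s*T_1*\cdots*T_g$, and by composing $Y$-equivalences it is enough to treat the torus case. (Alternatively, one can proceed handle-by-handle following a handle decomposition of $T$ and place one clasper per handle.)

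\textbf{Torus case and main obstacle.} For a torus $T$, I construct an explicit $Y$-clasper $G$ in $T\times[-1,1]$ of Borromean type, whose three annular leaves encircle a longitudinal copy of one generator of $H_1(T)$, a longitudinal copy of the other, and a short fiber arc transverse to $T$. A Kirby-calculus check on the associated $6$-component framed link verifies that $(T\times[-1,1])_G$ is homeomorphic to $T\times[-1,1]$ rel $\partial$, via a homeomorphism $h$ realizing a non-trivial element of the boundary mapping class group. The key step, and the main obstacle, is verifying that $h^*(s_G)-s=[T]^!$. The difficulty stems from the following cohomological fact: $[T]^!\in H^1(M;\Z_2)$ restricts to zero both in $H^1(N;\Z_2)$ and in $H^1(M\setminus\opint N;\Z_2)$, so the twist is detected only through the Mayer--Vietoris connecting map for the decomposition $M=N\cup(M\setminus\opint N)$. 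Consequently the spin shift cannot be identified by any purely local calculation inside $N$; it must be read off from the boundary effect of $h$, and the cleanest way to do this is to invoke Massuyeau's explicit formula \cite{Massuyeau:2003} for the spin-structure change induced by a $Y$-surgery, or alternatively to compute it directly via a bounding spin $4$-manifold. I expect this last verification to be the bulk of the proof.
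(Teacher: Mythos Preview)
Your torus case has a genuine gap. The third leaf of your $Y$-clasper ``encircles a short fiber arc transverse to $T$''; since the fiber $\{*\}\times[-1,1]$ is an arc, any such leaf bounds a disk in $N=T\times[-1,1]$, and a $Y$-clasper with a disk-bounding leaf acts trivially on both the manifold and the spin structure. More fundamentally, the whole strategy of confining the claspers to the collar $N$ cannot succeed, for precisely the reason you yourself isolate. If $\G\subset\opint N$ and $h\colon N_\G\congto N$ restricts to the identity on $\partial N$, then $h_*(s_\G|_N)$ and $s|_N$ agree on $\partial N$; but for $N=T\times[-1,1]$ the restriction map $\Spin(N)\to\Spin(\partial N)$ is injective, so $h_*(s_\G|_N)=s|_N$, and the induced homeomorphism $M_\G\congto M$ carries $s_\G$ to $s$, not to $s*T$. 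Thus no collar-local family of $Y$-claspers can realize the twist $[T]^!$, and invoking Massuyeau's formula will not rescue this---the formula will simply return zero. (Your genus reduction also needs adjustment: surgering $T$ along $g$ non-separating curves from a half-basis produces a sphere, not $g$ tori.)

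The paper avoids this obstruction by a different mechanism that does not stay inside a collar. Take $c\subset T$ bounding a disk $D\subset T$, let $A$ be an annular neighborhood of $c$ in $T$, and set $K=(c,+1)$. On the one hand $c$ bounds $D$, so $V_1=(A\cup D)\times[-1,1]$ and $(V_1)_K$ are both balls, giving a canonical spin homeomorphism $(M_K,s_K)\cong(M,s)$. On the other hand $c$ is null-homologous in the genus-$2g$ handlebody $V_2=(A\cup(T\setminus\opint(A\cup D)))\times[-1,1]$, so by Matveev $(V_2)_K$ is $Y$-equivalent to $V_2$ rel boundary, and this extends to a $Y$-equivalence $M_K\sim_Y M$. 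The two identifications of $M_K$ with $M$ disagree on spin structures by exactly $[T]^!$: a direct check gives $s_K|_{M\setminus\opint V_2}=(s*T)|_{M\setminus\opint V_2}$, and since $V_2$ is a handlebody the map $\Spin(M)\to\Spin(M\setminus\opint V_2)$ is injective, so the $Y$-equivalence carries $(M_K,s_K)\cong(M,s)$ to $(M,s*T)$. No genus reduction is needed, and the $Y$-claspers implicit in Matveev's equivalence live in $V_2$, not in a full bicollar of $T$.
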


\begin{proof}
  We may assume that $T$ is connected, since the general case follows
  from this special case.

  Take a bicollar neighborhood $T\times[-1,2]\subset M$.
  Set $T_2=T\times\{2\}\subset M$.
  Let $c$ be a simple closed curve in $T$ bounding a disk in $T$.
  Let $A$ denote a bicollar neighborhood of $c$ in $T$.
  Let $D$ and $T'$ be the two components of  $T\setminus \opint A$,
  where $D$ is a disk.
  Set
  \begin{gather*}
    V_0= A\times[-1,1],\quad
    V_1= (A\cup D)\times[-1,1],\quad
    V_2= (A\cup T')\times[-1,1],\\
    M_i=M\setminus \opint V_i,\quad i=0,1,2.
  \end{gather*}
  Note that $M_1,M_2\subset M_0$.
  For $i=0,1,2$, set $s_i=s|_{M_i}\in\Spin(M_i)$.

  Let $K=(c,+1)$ denote the framed knot in $M$ whose underlying knot
  is $c$ and the framing is $+1$.  Let $M_K$ denote the result of
  surgery along $K$, which may be regarded as the manifold
  $M_0\cup_\partial (V_0)_K$ obtained from $M_0$ and the result of surgery
  $(V_0)_K$ by gluing along their boundaries in the natural way.
  We may regard $M_0$, $M_1$ and $M_2$ as submanifolds of $M_K$.

  Note that $V_1$ and $(V_1)_K$ are $3$-balls.  Hence there is a
  unique spin structure $s_K\in\Spin(M_K)$ such that
  $(s_K)|_{M_1}=s_1$.  We have the spin homeomorphism
  $(M,s)\cong(M_K,s_K)$.

  We have
  \begin{gather*}
    s_K|_{M_0}=s_0*D=s_0*T_2.
  \end{gather*}
  Hence we have
  \begin{gather*}
    s_K|_{M_2} =s_2*T_2.
  \end{gather*}

  It suffices to prove that $(M_K,s_K)$ is $Y$-equivalent to
  $(M,s*T_2)=(M,s*T)$.  Since the framed knot $K$ is
  null-homologous in $V_2$ and $+1$-framed, $(V_2)_K$ is
  $Y$-equivalent to $V_2$ in a way respecting the boundary
  \cite{Matveev:1986}.  This $Y$-equivalence extends to
  $Y$-equivalence of $M_K$ and $M$.  This $Y$-equivalence implies the
  desired $Y$-equivalence of $(M_K,s_K)$ and $(M,s*T_2)$ since we
  have
  \begin{gather*}
    s_K|_{M_2}=s_2*T_2=(s*T_2)|_{M_2},
  \end{gather*}
  and since the maps
  \begin{gather*}
    \Spin(M_K)\to\Spin(M_2),\quad \Spin(M)\to\Spin(M_2)
  \end{gather*}
  induced by inclusions are injective.
\end{proof}

\subsection{$(\Sigma,s_\Sigma)$-bordered spin $3$-manifolds}

We fix a spin structure $s_\Sigma\in\Spin(\Sigma)$.  In the following
we consider $Y$-equivalence of spin $3$-manifolds with boundary
parameterized by the spin surface $\SsS$.

A {\em $\SsS$-bordered spin $3$-manifold} is a triple $(M,\phi ,s)$
consisting of a $\Sigma$-bordered $3$-manifold $(M,\phi)$ and a spin
structure $s\in\Spin(M)$ such that $\phi^*(s)=s_\Sigma $.

Clearly, surgery along a $Y$-clasper in $M$ preserves the spin
structure on the boundary of $M$.  Hence a $Y$-surgery on a
$\SsS$-bordered spin $3$-manifold yields another $\SsS$-bordered spin
$3$-manifold.

\subsection{Gluing of $\SsS$-bordered spin $3$-manifolds}

Let $(M,\phi,s)$ and $(M',\phi',s')$ be two $\SsS$-bordered spin
$3$-manifolds.   Let $M''=(-M)\cup_{\phi,\phi'} M'$ be the closed
$3$-manifold obtained from $-M$ (the orientation reversal of $M$) and
$M'$ by gluing their boundaries along $\phi'\circ \phi^{-1}$.

By a {\em gluing} of $s$ and $s'$, we mean a spin structure
$s''\in\Spin(M'')$ satisfying
\begin{gather*}
  s''|_{-M}=s,\quad s''|_{M'}=s'.
\end{gather*}
If $\Sigma$ is empty or connected, then $s''$ is uniquely determined
by $s$ and $s'$.  Otherwise, $s''$ is not unique.

The spin manifold $(M'',s'')$ is called a {\em gluing} of $(M,\phi,s)$
and $(M',\phi',s')$.

\begin{proposition}
  \label{r5}
  All the gluings of two $\SsS$-bordered spin $3$-manifolds $(M,\phi,s)$
  and $(M',\phi',s')$ are mutually $Y$-equivalent.
\end{proposition}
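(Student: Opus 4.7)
The plan is to reduce the proposition to iterated applications of Proposition \ref{r6}. If $\Sigma$ is empty or connected, the gluing is unique by the remark just before the statement, so there is nothing to prove. Otherwise I would write $\Sigma=\Sigma_1\sqcup\cdots\sqcup\Sigma_k$ with $k\ge 2$ and fix two gluings $s''_1,s''_2$. Since both restrict to $s$ on $-M$ and to $s'$ on $M'$, their difference $c:=s''_1-s''_2\in H^1(M'';\Z_2)$ lies in the kernel of the restriction map $H^1(M'';\Z_2)\to H^1(M;\Z_2)\oplus H^1(M';\Z_2)$.

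The key intermediate step is to show that $c=\sum_{i\in I}[\Sigma_i]^!$ for some subset $I\subset\{1,\ldots,k\}$, where each component $\Sigma_i$ is pushed slightly off $\partial M$ into a bicollar $\Sigma\times[-1,1]\subset M''$ and thereby realized as a closed orientable surface in the interior of $M''$. I would obtain this from the Mayer-Vietoris cohomology sequence for $M''=(-M)\cup_\Sigma M'$ with $\Z_2$ coefficients: the kernel of the restriction above equals the image of the connecting map $\delta\colon H^0(\Sigma;\Z_2)\to H^1(M'';\Z_2)$, and the standard Poincar\'{e}-dual description of $\delta$ identifies $\delta(1_{\Sigma_i})$ with $[\Sigma_i]^!$. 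A quick dimension count (both sides have $\Z_2$-dimension $k-1$, using that $M$ and $M'$ are connected and that the classes $[\Sigma_i]^!$ satisfy the single relation $\sum_i[\Sigma_i]^!=0$ coming from $\partial M=\Sigma$ bounding in $M''$) confirms that these classes span the entire kernel.

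Once the decomposition $c=\sum_{i\in I}[\Sigma_i]^!$ is in hand, Proposition \ref{r6} applied to each $\Sigma_i$ in turn gives a chain of $Y$-equivalences from $(M'',s''_2)$ to $(M'',s''_2*\Sigma_{i_1})$, then to $(M'',s''_2*\Sigma_{i_1}*\Sigma_{i_2})$, and so on, ending at $(M'',s''_2+\sum_{i\in I}[\Sigma_i]^!)=(M'',s''_1)$. Transitivity of $Y$-equivalence then completes the proof.

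The step I expect to be the main technical point is identifying $\delta(1_{\Sigma_i})$ with $[\Sigma_i]^!$; this is the standard Poincar\'{e}-dual description of the Mayer-Vietoris connecting homomorphism, but one has to be careful to choose the geometric representative correctly and to arrange each $\Sigma_i$ as an honestly embedded closed surface in the interior of $M''$ (via the bicollar) before invoking Proposition \ref{r6}.
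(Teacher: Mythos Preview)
Your argument is correct and follows essentially the same strategy as the paper: both show that any two gluings differ by a class of the form $\sum_{i\in I}[\Sigma_i]^!$ and then invoke Proposition~\ref{r6}. The only difference is in how that key formula is obtained: the paper parametrizes the $2^{n-1}$ gluings explicitly by choosing framed knots $K_i$ (each meeting $\Sigma_1$ and $\Sigma_i$ transversely once) and reading off the twisting directly, whereas you extract the same conclusion from the Mayer--Vietoris connecting map. Your route avoids the auxiliary knots at the cost of invoking the Poincar\'{e}-dual description of $\delta$; the paper's route is more hands-on but makes the enumeration of gluings completely explicit. Either way the reduction to Proposition~\ref{r6} is identical.
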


\begin{proof}
  If $\Sigma$ has at most one boundary component, then there is
  nothing to prove since there is only one gluing of $(M,\phi,s)$
  and $(M',\phi',s')$.
  
  Suppose $\Sigma$ has components $\Sigma_1,\dots,\Sigma_n$ with
  $n\ge2$.  For $i=2,\dots,n$, choose a framed knot $K_i$ in
  $M''=(-M)\cup_{\phi,\phi'} M'$ which transversely intersects each of
  $\Sigma_1$ and $\Sigma_i$ by exactly one point and is disjoint from
  the other components of $\Sigma$.  There are $2^{n-1}$
  gluings $s''_{\epsilon_2,\dots,\epsilon_n}\in\Spin(M'')$ of $s$ and
  $s'$ for $\epsilon_2,\dots,\epsilon_n\in\{0,1\}$, where for
  $i=2,\dots,n$ the framed knot $K_i$ is even framed with
  respect to $s''_{\epsilon_2,\dots,\epsilon_n}$ if $\epsilon_i=0$,
  and odd framed otherwise.  Moreover, we have
  \begin{gather*}
    s''_{\epsilon_2,\dots,\epsilon_n}
    =s''_{0,\dots,0}*\left(\bigcup_{2\le i\le n,\;\epsilon_i=1}\Sigma_i\right).
  \end{gather*}
  Hence, by Proposition \ref{r6}, $(M'',s''_1)$ and $(M'',s''_2)$ are
  $Y$-equivalent.
\end{proof}

\subsection{Rochlin invariant mod $8$ of pairs of $\SsS$-bordered spin $3$-manifolds}
\label{sec:rochl-invar-pairs}

Let $(M,\phi,s)$ and $(M',\phi',s')$ be two $\SsS$-bordered spin
$3$-manifolds.  Set
\begin{gather}
  \label{e1}
  R_8((M,\phi,s),(M',\phi',s')):= (R(M'',s'')\mod {8})\in\Z_8,
\end{gather}
where $M''=(-M)\cup_{\phi,\phi'} M'$ as before and $s''\in\Spin(M'')$
is any gluing of $s$ and~$s'$.  Proposition \ref{r5} and Theorem
\ref{massuyeau} imply that \eqref{e1} is well defined.

\begin{lemma}
  \label{r3}
  The invariant $R_8((M,\phi,s),(M',\phi',s'))$ depends only 
  on the $Y$-equivalence classes of $(M,\phi,s)$ and $(M',\phi',s')$.
\end{lemma}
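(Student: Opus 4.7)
\medskip

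\noindent\textbf{Proof plan.} By symmetry of the two arguments of $R_8$ and by transitivity of $Y$-equivalence, the plan is to reduce the statement to the following special case: if $G \subset M$ is a single $Y$-clasper, then
\[
R_8\bigl((M,\phi,s),(M',\phi',s')\bigr) = R_8\bigl((M_G,\phi_G,s_G),(M',\phi',s')\bigr).
\]
After proving this, iterating over the finite list of disjoint claspers supplied by Lemma \ref{r1} handles the general $Y$-equivalence between $(M,\phi,s)$ and any other $\SsS$-bordered spin $3$-manifold, and the same argument applied to the second factor finishes the proof.

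For the single-clasper reduction, I would form the closed gluing $M'' = (-M)\cup_{\phi,\phi'}M'$ and notice that since $G$ sits in the interior of $M$, it can be viewed as a $Y$-clasper of $M''$, entirely contained in the $-M$ side and disjoint from the gluing surface $\phi(\Sigma)$ and from $M'$. Consequently, the result of surgery $M''_G$ is canonically identified with $(-M_G)\cup_{\phi_G,\phi'}M'$.

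The heart of the argument is then the following spin-structure bookkeeping. Fix any gluing $s'' \in \Spin(M'')$ of $s$ and $s'$, and consider the surgered spin structure $s''_G \in \Spin(M''_G)$ provided by Massuyeau's construction recalled in Section \ref{rochlin}. Since $s''_G$ is characterized by agreeing with $s''$ on the complement of a regular neighborhood $N(G)$ of $G$, and since this complement contains both $M'$ and $(M\setminus\opint N(G))$, the restrictions of $s''_G$ to $-M_G$ and to $M'$ equal $s_G$ and $s'$ respectively. Hence $s''_G$ is itself a gluing of $s_G$ and $s'$, and can therefore be used to compute $R_8((M_G,\phi_G,s_G),(M',\phi',s'))$ by Proposition \ref{r5} (the choice of gluing is immaterial). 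Finally, because $(M'',s'')$ and $(M''_G,s''_G)$ are $Y$-equivalent closed spin $3$-manifolds, Massuyeau's Theorem \ref{massuyeau} gives $R(M'',s'')\equiv R(M''_G,s''_G)\pmod 8$, yielding the desired equality.

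I expect the main obstacle to be precisely this last bookkeeping step: verifying carefully that $s''_G$ restricts correctly on both pieces so as to qualify as a gluing of $s_G$ with $s'$. This requires using the disjointness of $G$ from the gluing surface, the locality of the Massuyeau surgery rule for spin structures, and the definitions of $s_G$ on the $M$-side and of the induced spin structure on $-M$. Once this identification is established, the conclusion is immediate from Proposition \ref{r5} and Theorem \ref{massuyeau}.
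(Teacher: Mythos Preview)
Your proposal is correct and follows essentially the same approach as the paper: both arguments show that gluings of $Y$-equivalent $\SsS$-bordered spin $3$-manifolds are $Y$-equivalent closed spin $3$-manifolds (using Proposition \ref{r5} to absorb the ambiguity in the choice of gluing), and then invoke Theorem \ref{massuyeau}. The paper simply carries out both factors at once and leaves the spin-structure bookkeeping implicit, whereas you reduce to a single clasper in one factor and spell out carefully why $s''_G$ is a gluing of $s_G$ and $s'$; your more detailed unpacking of this step is exactly what the paper's one-line assertion ``Then $(M''_1,s''_1)$ and $(M''_2,s''_2)$ are $Y$-equivalent'' is relying on.
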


\begin{proof}
  Suppose that $(M_1,\phi_1,s_1)$ is $Y$-equivalent to
  $(M_2,\phi_2,s_2)$ and that $(M'_1,\phi'_1,s'_1)$ is $Y$-equivalent to
  $(M'_2,\phi'_2,s'_2)$.  Consider gluings $(M''_i,s''_i)$ of
  $(M_i,\phi_i,s_i)$ and $(M'_i,\phi'_i,s'_i)$ for $i=1,2$.
  Then $(M''_1,s''_1)$ and $(M''_2,s''_2)$ are $Y$-equivalent.
  Hence we have
  \begin{gather*}
    \begin{split}
      &R_8((M_1,\phi_1,s_1),(M'_1,\phi'_1,s'_1))
    =(R(M''_1,s''_1)\mod8)\\
    &\quad \quad =(R(M''_2,s''_2)\mod8)
    =R_8((M_2,\phi_2,s_2),(M'_2,\phi'_2,s'_2)).
    \end{split}
  \end{gather*}
\end{proof}

\subsection{Main results}

Now we state the main result of the present paper, which gives a
characterization of $Y$-equivalence of $\SsS$-bordered spin
$3$-manifolds in terms of homology isomorphism and the Rochlin
invariant mod $8$.

\begin{conjecture}
\label{main}
Let $(M,\phi,s)$ and
$(M',\phi',s')$ be two $\SsS$-bordered spin $3$-manifolds.
Then the following conditions are equivalent.
\begin{enumerate}
\item $(M,\phi,s)$ and $(M',\phi',s')$ are $Y$-equivalent.
\item There is a homology isomorphism from $(M,\phi)$ to $(M',\phi')$,
and we have
\begin{gather*}
  R_8((M,\phi,s),(M',\phi',s'))=0\pmod {8}.
\end{gather*}
\end{enumerate}
\end{conjecture}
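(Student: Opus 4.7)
I would split the argument into the two implications.

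\medskip

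\emph{Direction $(1)\Rightarrow(2)$.} The homology isomorphism is furnished by Theorem~\ref{habiro} applied to the underlying $\Sigma$-bordered $3$-manifolds (forgetting the spin structures). For the Rochlin condition, I would apply Lemma~\ref{r3} to get $R_8((M,\phi,s),(M',\phi',s'))=R_8((M,\phi,s),(M,\phi,s))$. This equals $R(DM,s'')\bmod 8$ for any gluing $s''$, and the gluing obtained from the product spin structure on $M\times[0,1]$ restricts to such a spin structure on $DM=\partial(M\times[0,1])$ after smoothing corners. Since $M\times[0,1]$ is a spin $4$-manifold of zero signature, $R(DM,s'')\equiv 0\pmod{16}$, hence a fortiori $\pmod 8$.

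\medskip

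\emph{Direction $(2)\Rightarrow(1)$.} Forgetting spin, I would first apply Theorem~\ref{habiro} to the homology isomorphism $(M,\phi)\to(M',\phi')$: it yields a sequence of $Y$-surgeries on $(M,\phi)$ producing $(M',\phi')$, and such a sequence carries $s$ to a spin structure $s''\in\Spin(M')$ with $(\phi')^{*}(s'')=s_\Sigma$ and $(M,\phi,s)\sim_Y(M',\phi',s'')$. Combined with Lemma~\ref{r3} and the hypothesis, this gives $R_8((M',\phi',s''),(M',\phi',s'))=0$, so the problem reduces to the following: two $\SsS$-bordered spin structures $s',s''$ on the same $(M',\phi')$ with vanishing $R_8$ between them are $Y$-equivalent.

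\medskip

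The natural first tool for this reduction is Proposition~\ref{r6}. The difference $c:=s''-s'\in H^1(M';\Z_2)$ vanishes on $\partial M'$; if $c$ is Poincar\'e--Lefschetz dual to a closed orientable surface $T\subset\opint M'$, then $s''=s'*T$ and Proposition~\ref{r6} gives the desired $Y$-equivalence at once. The main obstacle is that $c$ need not admit such a representative: its integer Bockstein $\beta(c)\in H^2(M';\Z)$ associated to $0\to\Z\xrightarrow{\times 2}\Z\to\Z_2\to 0$ obstructs being a mod-$2$ reduction of an integer class, and is nonzero already in simple closed examples such as $L(4,1)$.

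\medskip

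To handle the non-orientable case I would close up the problem. Fix a spin filling $(N,\psi,t)$ of $(\Sigma,s_\Sigma)$ (for instance a disjoint union of spin handlebodies) and form the closed spin $3$-manifolds $(X,t\cup s'')$ and $(X,t\cup s')$ on $X:=N\cup_\Sigma M'$. Using that the Rochlin change under twisting by a class supported in $\opint M'$ depends only on a representing surface and the spin structure on $M'$, one obtains $R(X,t\cup s'')\equiv R(X,t\cup s')\pmod 8$ from the hypothesis, and Massuyeau's Theorem~\ref{massuyeau} then produces a $Y$-equivalence between these two closed spin $3$-manifolds. The most delicate step---the technical heart of the proof---is a \emph{localization} argument: showing that the $Y$-claspers realizing the closed $Y$-equivalence can be taken to lie inside $\opint M'\subset X$, presumably after enlarging the family by auxiliary cancelling claspers and exploiting the freedom of gluing provided by Proposition~\ref{r5}. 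Once this localization is achieved, the closed $Y$-equivalence descends to the desired $\SsS$-bordered $Y$-equivalence $(M',\phi',s'')\sim_Y(M',\phi',s')$, completing the proof.
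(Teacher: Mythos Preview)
The statement you are attempting to prove is labeled a \emph{conjecture} in the paper, not a theorem; the paper does not prove it in full. What the paper actually establishes is Theorem~\ref{r10}: $(1)\Rightarrow(2)$ in general, and $(2')\Rightarrow(1)$ under the additional hypothesis that $H_1(M;\Z)$ has no $2$-torsion (note that $(2')$ drops the Rochlin condition entirely).

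Your $(1)\Rightarrow(2)$ argument is correct and essentially the same as the paper's proof in Section~\ref{sec:proof-theor-refm}: forget spin for the homology isomorphism, and for the Rochlin condition use Lemma~\ref{r3} to reduce to a self-gluing, which bounds the spin cylinder $M\times[0,1]$ of signature zero (the paper also invokes Proposition~\ref{r5} to handle the non-uniqueness of the gluing when $\Sigma$ is disconnected).

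Your $(2)\Rightarrow(1)$ argument, however, is not a proof but a program, and the gap is exactly the point at which the paper stops short of the full conjecture. You correctly reduce to comparing two spin structures $s',s''$ on the same $(M',\phi')$, and you correctly identify the Bockstein obstruction to applying Proposition~\ref{r6}. But your proposed resolution---close up, apply Massuyeau's theorem, then ``localize'' the resulting $Y$-claspers back into $\opint M'$---is asserted, not carried out. There is no mechanism in the paper (nor any obvious one) for pushing $Y$-claspers off the filling $N$ while preserving the spin $Y$-equivalence; a clasper that links nontrivially with $\Sigma\subset X$ cannot simply be isotoped into $M'$, and trading it for claspers in $M'$ is precisely what would need to be proved. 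This localization step is the technical heart you yourself flag, and it remains open.

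For comparison, the paper's partial result takes a completely different route: under the no-$2$-torsion hypothesis it builds $M$ from a $\Z_2$-homology handlebody by attaching $2$-handles along null-homologous curves, and inducts on the number of handles. At the base case the restriction $\Spin(M)\to\Spin(\Sigma)$ is injective, so spin $Y$-equivalence follows immediately from ordinary $Y$-equivalence and the Rochlin condition is never used. Your Bockstein obstruction is exactly what the no-$2$-torsion hypothesis kills, which is why the paper can avoid the localization problem altogether in that case.
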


It follows from Theorem \ref{habiro} that Conjecture \ref{main} is equivalent to the
following.

\begin{conjecture}
  \label{r2}
  Let $(M,\phi,s)$ and
  $(M',\phi',s')$ be two $\SsS$-bordered spin $3$-manifolds.
  Then the following conditions are equivalent.
  \begin{enumerate}
  \item $(M,\phi,s)$ and $(M',\phi',s')$ are $Y$-equivalent.
  \item $(M,\phi)$ and $(M',\phi')$ are $Y$-equivalent, and we have
    \begin{gather*}
      \label{e5}
      R_8((M,\phi,s),(M',\phi',s'))=0\pmod {8}.
    \end{gather*}
  \end{enumerate}
\end{conjecture}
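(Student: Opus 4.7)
The implication $(1)\Rightarrow(2)$ is immediate from Lemma \ref{r3}: if $(M,\phi,s)\sim_Y(M',\phi',s')$, then
\[
R_8((M,\phi,s),(M',\phi',s')) \equiv R_8((M,\phi,s),(M,\phi,s)) \pmod{8},
\]
and the right-hand side is computed from the double $(-M)\cup_\phi M$ equipped with the spin structure obtained by gluing $s$ to itself. This closed spin $3$-manifold bounds $M\times I$ with its product spin structure, so its Rochlin invariant vanishes, establishing $(2)$.

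For $(2)\Rightarrow(1)$ I would start by applying Lemma \ref{r1} to choose mutually disjoint $Y$-claspers $G_1,\dots,G_n$ in $M$ such that $(M,\phi)_{G_1,\dots,G_n}\cong(M',\phi')$. Performing these $Y$-surgeries on $(M,\phi,s)$ transports the spin structure and produces a $\SsS$-bordered spin $3$-manifold which I identify with $(M',\phi',s'')$ for some $s''\in\Spin(M')$. By construction $(M,\phi,s)\sim_Y(M',\phi',s'')$, so it suffices to show $(M',\phi',s')\sim_Y(M',\phi',s'')$. Applying the already-proved implication $(1)\Rightarrow(2)$ gives $R_8((M,\phi,s),(M',\phi',s''))\equiv 0\pmod{8}$; combining this with the hypothesis $R_8((M,\phi,s),(M',\phi',s'))\equiv 0\pmod{8}$ and a standard additivity property of the Rochlin invariant mod $8$ for gluings of spin $3$-manifolds with boundary (Novikov additivity of signatures together with the vanishing of the relevant Maslov triple index in the spin mod-$8$ setting), we obtain $R_8((M',\phi',s'),(M',\phi',s''))\equiv 0\pmod{8}$, reducing the problem to the same-manifold case.

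Now the difference $c:=s''-s'\in H^1(M';\Z_2)$ lies in the kernel of restriction $H^1(M';\Z_2)\to H^1(\partial M';\Z_2)$, and by Poincar\'e--Lefschetz duality this kernel is the image of $H_2(M';\Z_2)\to H_2(M',\partial M';\Z_2)$. Every such class is Poincar\'e dual to the $\Z_2$-class of a closed embedded surface $T\subset M'$, and if $T$ can be chosen \emph{orientable} then $s''=s'*T$ and Proposition \ref{r6} directly yields $(M',\phi',s')\sim_Y(M',\phi',s'')$, completing the proof in this case.

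The main obstacle is the remaining case, in which $c$ lies outside the image of $H_2(M';\Z)\otimes\Z_2\to H_2(M';\Z_2)$ and all representing surfaces are necessarily non-orientable. The proof of Proposition \ref{r6} relies essentially on a bicollar neighborhood of the twisting surface and does not extend. The discrepancy is precisely measured by the $2$-torsion of $H_1(M';\Z)$, and closing this gap will require either extending the twist construction to the non-orientable setting (tracking the effect on the spin structure through a twisted $I$-bundle neighborhood) or producing an alternative $Y$-surgery realization of such a twist, while crucially invoking the hypothesis $R_8\equiv 0\pmod 8$ to rule out non-orientable twists that alter the Rochlin invariant by $4\pmod 8$, in the spirit of Massuyeau's closed-manifold argument. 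Matching this $\Z_2$-torsion obstruction precisely with the mod-$8$ Rochlin condition is the delicate step that prevents this outline from being a complete proof and accounts for the result appearing in the paper as a conjecture.
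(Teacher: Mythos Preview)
Your treatment is essentially correct and in line with the paper's: the statement is a \emph{conjecture} there, and the paper only proves the partial result (Theorem \ref{r10}) that $(1)\Rightarrow(2)$ in general and $(2')\Rightarrow(1)$ when $H_1(M;\Z)$ has no $2$-torsion. Your argument for $(1)\Rightarrow(2)$ is the same as the paper's (double, bounds the collapsed cylinder $C$ with product spin structure, signature zero); the only small omission is that when $\Sigma$ has several components the gluing of $s$ with itself is not unique, so one must invoke Proposition~\ref{r5} to know that the particular gluing bounding $M\times I$ computes $R_8$. The paper does this explicitly.

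For the converse direction your route genuinely differs from the paper's. The paper builds $M$ from a $\Z_2$-homology handlebody by attaching $2$-handles along null-homologous curves and inducts on the number of handles, at each stage using Proposition~\ref{r6} to adjust the spin structure near the attaching annulus. You instead perform the $Y$-surgeries first, land on $(M',\phi',s'')$, and then analyse the class $c=s''-s'\in\Ker\big(H^1(M';\Z_2)\to H^1(\partial M';\Z_2)\big)$, identifying it via duality with a closed surface in $M'$ and invoking Proposition~\ref{r6} when that surface can be taken orientable. Both arguments succeed precisely when $H_1(M;\Z)$ has no $2$-torsion (this is exactly the condition that $H_2(M';\Z)\otimes\Z_2\to H_2(M';\Z_2)$ is onto), and both isolate the $2$-torsion as the obstruction; your reduction is arguably more transparent, while the paper's handle induction keeps the geometry localised. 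Note that in the no-$2$-torsion case neither argument actually uses the Rochlin hypothesis, which is why the paper states the partial result with $(2')$ rather than $(2)$; your appeal to additivity of $R_8$ (the Wall/Maslov correction) is therefore only needed in the genuinely conjectural part, and you are right to flag it as the point where the outline stops being a proof.
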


The following theorem says that Conjecture \ref{r2} holds when
$H_1(M;\Z)$ has no $2$-torsion.  The proof of this result does not use
definitions and results given in \cite{Habiro}, which is not 
available when we are writing the present paper.

\begin{theorem}
  \label{r10}
  In the setting of Conjecture \ref{r2}, (1) implies (2).
  Moreover, if $H_1(M;\Z)$ has no $2$-torsion, then 
  \begin{enumerate}
     \item[(2')] $(M,\phi)$ and $(M',\phi')$ are $Y$-equivalent.
  \end{enumerate}
  implies (1).
\end{theorem}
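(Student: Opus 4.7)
The implication \emph{(1) $\Rightarrow$ (2)} is direct: spin $Y$-equivalence forgets to $Y$-equivalence of the underlying $\Sigma$-bordered manifolds, and Lemma \ref{r3} reduces the Rochlin statement to $R_8((M,\phi,s),(M,\phi,s)) = 0$. This invariant is computed by gluing two copies of $(M,\phi,s)$ into the double $DM = (-M) \cup_{\id} M$ with any gluing $s''$ of $s$ with itself. Such a gluing is induced by the spin $4$-manifold $W = M \times [0,1]$, whose intersection form vanishes. Rochlin's theorem then gives $R(DM, s'') \equiv \sigma(W) = 0 \pmod{16}$, and hence $R_8 = 0$.

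For the implication \emph{(2') $\Rightarrow$ (1)} under the no-$2$-torsion assumption, the plan is as follows. By Lemma \ref{r1} there exist disjoint $Y$-claspers $G_1,\ldots,G_n$ in $M$ and a $\Sigma$-bordered homeomorphism $h\colon (M,\phi)_{G_1,\ldots,G_n} \congto (M',\phi')$. Write $M_G = M_{G_1,\ldots,G_n}$ and let $s_G \in \Spin(M_G)$ be the spin structure produced by the spin $Y$-surgeries of Section \ref{rochlin} applied to $s$. Then $s_G$ and $h^*(s')$ are two spin structures on $M_G$, both restricting on $\partial M_G$ to the boundary spin structure induced by $\phi$. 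Thus $h^*(s') - s_G \in H^1(M_G; \Z_2)$ lies in the image of $H^1(M_G, \partial M_G; \Z_2) \to H^1(M_G; \Z_2)$, which by Poincar\'e--Lefschetz duality corresponds to a class in $H_2(M_G; \Z_2)$.

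Now invoke the no-$2$-torsion hypothesis. Since $Y$-surgery preserves integral homology, $H_1(M_G; \Z) \cong H_1(M; \Z)$ has no $2$-torsion, so the universal coefficient theorem collapses to $H_2(M_G; \Z_2) \cong H_2(M_G; \Z) \otimes \Z_2$. The correcting $\Z_2$-class therefore lifts to an integral class, which in a compact oriented $3$-manifold is represented by an embedded closed orientable surface $T \subset M_G$. Then $h^*(s') = s_G * T$, and Proposition \ref{r6} yields a $Y$-equivalence of spin $3$-manifolds between $(M_G, s_G)$ and $(M_G, s_G * T) \cong (M', \phi', s')$. Composing with the spin $Y$-equivalence of $(M,\phi,s)$ and $(M_G, \phi_G, s_G)$ supplied by the claspers $G_1,\ldots,G_n$ themselves yields condition (1).

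The main obstacle is the lifting step, and it pinpoints why the hypothesis cannot be relaxed along these lines. When $H_1(M;\Z)$ has $2$-torsion, the $\operatorname{Tor}$ term in the universal coefficient sequence can obstruct representing the correcting class by a closed orientable surface, forcing one to twist along non-orientable closed surfaces. Such twists generally alter the Rochlin invariant mod $16$, which should be the origin of the missing Rochlin mod $8$ hypothesis in the full Conjecture \ref{r2}.
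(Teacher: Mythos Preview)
Your argument for $(1)\Rightarrow(2)$ matches the paper's: both bound the double of $M$ by (a quotient of) $M\times[0,1]$, read off signature zero, and use Lemma~\ref{r3} to reduce to the self-gluing.

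For $(2')\Rightarrow(1)$ your route is correct but genuinely different and more direct than the paper's. The paper proceeds via a structural lemma: any $M$ with no $2$-torsion in $H_1$ is obtained from a $\Z_2$-homology handlebody by attaching $2$-handles along integrally null-homologous curves in its boundary. It then inducts on the number of such handles; at the inductive step the possible spin discrepancy across the last handle is corrected by twisting along a Seifert surface for the attaching curve capped off by the core disk of the handle (an application of Proposition~\ref{r6}), and the base case holds because for a $\Z_2$-homology handlebody the restriction map $\Spin(M)\to\Spin(\partial M)$ is injective. Your argument bypasses this induction and case analysis entirely: you compare the two spin structures globally on $M_G$, use Poincar\'e--Lefschetz duality together with the vanishing of $\operatorname{Tor}(H_1(M_G;\Z),\Z_2)$ to lift the difference class to $H_2(M_G;\Z)$, and realize it by a single embedded closed orientable surface to which Proposition~\ref{r6} applies once. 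Your approach is shorter and isolates exactly where the no-$2$-torsion hypothesis enters (surjectivity of $H_2(M_G;\Z)\to H_2(M_G;\Z_2)$); the paper's handle-by-handle approach, on the other hand, keeps the correcting surfaces geometrically explicit and localized near each $2$-handle.
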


\section{Proof of Theorem \ref{r10}}
\label{sec:proof-theor-refm}

\subsection{Proof of $(1)\Rightarrow(2)$}

Suppose that (1) of Theorem \ref{r2} holds.  Then, clearly, $(M,\phi)$
and $(M',\phi')$ are $Y$-equivalent.  We have to prove
$R(M'',s'')\equiv 0\pmod8$, where $(M'',s'')$ is a gluing of
$(M,\phi,s)$ and $(M',\phi',s')$.

Since $(M,\phi,s)$ and $(M',\phi',s')$ are $Y$-equivalent, Lemma
\ref{r3} implies that $(M'',s'')$ is $Y$-equivalent to a gluing
$(M''_0,s''_0)$ of $(M,s)$ and itself.

Consider the $4$-manifold $C$ which is the quotient of the cylinder
$M\times[0,1]$ by the equivalence relation $(x,t)\sim(x,t')$ for
$x\in\partial M$ and $t,t'\in[0,1]$.  Then we may naturally identify
$M''_0$ with $\partial C$.  The $4$-manifold $C$ has a spin structure 
$s_C$ induced by the spin structure $s\times s_{[0,1]}\in\Spin(M\times
[0,1])$, where $s_{[0,1]}$ is the unique spin structure of $[0,1]$.
We have 
\begin{gather*}
  R(C,s_C)\equiv R(M\times[0,1],s\times s_{[0,1]})\equiv\sigma(M\times[0,1])=0\pmod{16}.
\end{gather*}

Since both $s''_0$ and $s_C$ are gluings of $(M,s)$ and itself, Proposition
\ref{r5} implies that $(M''_0,s''_0)$ and $(C,s_C)$ are
$Y$-equivalent.
Hence, by Theorem \ref{massuyeau}, we have
\begin{gather*}
  R(M'',s'')\equiv R(M''_0,s''_0)\equiv R(C,s_C)\equiv 0 \pmod8.
\end{gather*}

\subsection{Proof of $(2')\Rightarrow(1)$ when $H_1(M;\Z)$ has no $2$-torsion}

We assume that $H_1(M;\Z)$ has no $2$-torsion.

We divide the proof into three cases:
\begin{itemize}
\item $M$ is a $\Z_2$-homology handlebody, i.e.,
  $\partial M$ is connected and $H_1(M,\partial
  M;\Z_2)=0$.
\item $M$ has non-empty boundary.
\item $M$ is closed.
\end{itemize}

\subsubsection{Case where $M$ is a $\Z_2$-homology handlebody}
\label{sec:case-where-m}
Since $\Spin(M)\overset{\phi^*}{\to}\Spin(\Sigma)$ and
$\Spin(M')\overset{(\phi')^*}{\to}\Spin(\Sigma)$ are injective,
$Y$-equivalence of $(M,\phi)$ and $(M',\phi')$ implies $Y$-equivalence
of $(M,\phi,s)$ and $(M',\phi',s')$.

\subsubsection{Case where $\partial M$ is non-empty}
\label{sec:case-where-sigma}
We will use the following result.
\begin{lemma}
 Let $M$ be a $3$-manifold with boundary such that $H_1(M;\Z)$ has no $2$-torsion.
Then $M$ can be obtained from a $\Z_2$-homology handlebody $V$ by attaching $2$-handles
$h_1,\ldots, h_n$ (with $n\geq 0$) along simple closed curves
 $c_1,\ldots,c_n$ in $\partial V$ in such a way that each $c_i$ is
 null-homologous (over $\Z$) in $V$.
\end{lemma}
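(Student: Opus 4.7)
The plan is to take a handle decomposition of $M$ without $3$-handles, bring its attaching matrix into Smith normal form by handle slides, and absorb the non-trivial $2$-handles into the initial handlebody to form $V$.

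Since $M$ is a compact connected $3$-manifold with $\partial M\neq\emptyset$, it collapses to a $2$-complex, so one can write $M=H\cup h_1\cup\cdots\cup h_n$ where $H$ is a genus-$g$ handlebody and $h_1,\ldots,h_n$ are $2$-handles attached along disjoint simple closed curves $c_1,\ldots,c_n\subset\partial H$. Fixing a basis of $H_1(H;\Z)\cong\Z^g$, I form the attaching matrix $A\in\operatorname{Mat}_{g\times n}(\Z)$ whose $i$-th column is $[c_i]$, so that $H_1(M;\Z)\cong\operatorname{coker}(A)$. Sliding $2$-handles over $2$-handles realizes column operations on $A$, and sliding $1$-handles over $1$-handles realizes row operations; both moves preserve $M$ up to homeomorphism, keep $H$ a handlebody, and can be carried out keeping the $c_i$ disjoint. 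I would use these to bring $A$ to the Smith normal form $\operatorname{diag}(d_1,\ldots,d_r,0,\ldots,0)$ in some new basis $e_1,\ldots,e_g$ of $H_1(H;\Z)$, with $d_1\mid\cdots\mid d_r$ positive integers. Since $H_1(M;\Z)\cong\bigoplus_{i=1}^r\Z/d_i\oplus\Z^{g-r}$ has no $2$-torsion, each $d_i$ is odd. Now set $V:=H\cup h_1\cup\cdots\cup h_r$, so that $M=V\cup h_{r+1}\cup\cdots\cup h_n$, and for $i>r$ the class $[c_i]=0$ in $H_1(H;\Z)$, hence in $H_1(V;\Z)$, as required.

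Next I would check that $V$ is a $\Z_2$-homology handlebody. From the cellular chain complex of $V$ read off from the handle decomposition, the differential $C_2(V)=\Z^r\to C_1(V)=\Z^g$ is the matrix $\operatorname{diag}(d_1,\ldots,d_r)$ padded by zero rows, which is injective, so $H_2(V;\Z)=0$; and $H_1(V;\Z)=\bigoplus\Z/d_i\oplus\Z^{g-r}$ has no $2$-torsion since each $d_i$ is odd. By Poincar\'e--Lefschetz duality and the universal coefficient theorem, $H_1(V,\partial V;\Z_2)\cong H^2(V;\Z_2)\cong\operatorname{Hom}(H_2(V;\Z),\Z_2)\oplus\operatorname{Ext}(H_1(V;\Z),\Z_2)=0$.

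The step I expect to require the most care is showing that $\partial V$ is connected. By the standard fact that disjoint essential simple closed curves $c_1,\ldots,c_r$ on a closed surface $\Sigma$ separate $\Sigma$ if and only if some non-empty subsum of their classes vanishes in $H_1(\Sigma;\Z_2)$, it suffices to show that $[c_1],\ldots,[c_r]$ are linearly independent in $H_1(\partial H;\Z_2)$. The surjection $H_1(\partial H;\Z_2)\to H_1(H;\Z_2)$ sends $[c_i]\bmod 2$ to $d_i e_i\equiv e_i$ (since $d_i$ is odd), and $e_1,\ldots,e_r$ are linearly independent in $H_1(H;\Z_2)$; hence so are $[c_1],\ldots,[c_r]$ in $H_1(\partial H;\Z_2)$, and $\partial V$, obtained from $\partial H$ by surgery along these curves, is connected.
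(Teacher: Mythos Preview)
Your proof is correct and follows essentially the same route as the paper: take a handle decomposition, use handle slides to diagonalize the attaching matrix, and absorb the $2$-handles with odd diagonal entry into the handlebody to form $V$. You in fact supply more detail than the paper does, explicitly verifying both that $H_1(V,\partial V;\Z_2)=0$ and that $\partial V$ is connected (note, incidentally, that the former already forces the latter via the exact sequence $H_1(V,\partial V;\Z_2)\to \tilde H_0(\partial V;\Z_2)\to 0$), whereas the paper simply asserts that $V$ is a $\Z_2$-homology handlebody.
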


\begin{proof}
 $M$ can be obtained from a solid torus $V'$ of genus $g$ by attaching some $2$-handles
along simple closed curves $c'_1,\ldots, c'_k$ in $\partial V'$. After finitely many
handle-slides, we can assume the following.
\begin{itemize}
\item There is a basis $x_1, \ldots,x_g$ of $H_1(V';\Z)$ such that we have 
  \begin{gather*}
    [c_i]=\sum_{j=1}^g a_{i,j}x_j
  \end{gather*}
  for $i=1,\ldots,k$, where the matrix $(a_{i,j})$ is diagonal (but not
  necessarily square), in the sense that $a_{i,j}=\delta_{i,j}d_i$.
\end{itemize}
Clearly, $H_1(M;\Z)$ is isomorphic to
$\bigoplus_{i=1}^k\Z_{d_i}$. By the assumption that $H_1(M;\Z)$ has no $2$-torsion,
each $d_i$ is either odd or $0$. 

We may assume that, for some $n$, we have $d_1=\cdots=d_n=0$ and $d_{n+1}, \ldots,d_k$ are odd.
The union $V:=V'\cup h'_{n+1}\cup\cdots\cup h'_k$ is a $\Z_2$-homology 
handlebody. Setting $c_i=c'_i, h_i=h'_i$ for $i=1,\ldots,n$, we have the result.

\end{proof}

Let $M$ be obtained as above from a $\Z_2$-homology handlebody $V$ 
by attaching $2$-handles $h_1,\dots,h_n$ along disjoint simple closed curves
$c_1,\dots,c_n\subset\partial V$, $n=\operatorname{rank}H_2(M;\Z)\ge0$,
such that $c_i$ is null-homologous in $M$ and such that $\partial
M\setminus(c_1\cup\dots\cup c_n)$ is connected.  

The proof is by induction on $n$.  The case $n=0$ is proved in Section
\ref{sec:case-where-m}.  Suppose $n>0$.

Let $N=h_n=D^2\times[0,1]\subset M$ be one of the $2$-handles.
Set 
\begin{gather*}
A=\partial D^2\times[0,1]\subset \partial N,\\
B=D^2\times\{0,1\}\subset \partial N,\\
M_0 := \overline{M\setminus N}=V\cup h_1\cup\dots\cup h_{n-1}\subset M.
\end{gather*}
Thus, $M=M_0\cup_A N$ is obtained from a $3$-manifold $M_0$ by attaching $N$
along an annulus $A\subset\partial M_0$.  

Since $(M,\phi)$ and $(M',\phi')$ are $Y$-equivalent, it follows from
Lemma \ref{r1} that there exists a
disjoint family $\G$ of $Y$-claspers in $M$ and a homeomorphism
\begin{gather*}
  \Psi\colon (M_\G,\phi_\G)\congto(M',\phi').
\end{gather*}
By isotoping $\G$ if necessary, we may assume that $\G$ is contained
in the interior of~$M_0$.

Set $\Sigma_0:=(\Sigma\setminus \opint(\phi^{-1}(B)))\cup A$.  Then we
have a $\Sigma_0$-bordered $3$-manifold $(M_0,\phi_0)$ where
$\phi_0\colon\Sigma_0\congto\partial M_0$ is obtained by gluing
$\phi|_{\Sigma\setminus \opint(\phi^{-1}(B))}$ and $\id_A$.

Set 
\begin{gather*}
  M'_0:=\Psi((M_0)_\G)=\overline{M'\setminus \Psi(N)}\subset M'.
\end{gather*}
We have a $\Sigma_0$-bordered $3$-manifold $(M_0',\phi_0')$, where
$\phi_0'\colon\Sigma_0\congto\partial M'_0$ is obtained by gluing
$\phi'|_{\Sigma\setminus \opint(\phi^{-1}(B))}$ and $\Psi|_A\colon
A\congto \Psi(A)$.

We have  a homeomorphism of $\Sigma_0$-bordered $3$-manifolds
\begin{gather*}
  \Psi_0:=\Psi|_{M_0}\colon ((M_0)_\G,(\phi_0)_\G)\congto(M'_0,\phi'_0).
\end{gather*}

Set $s_{\Sigma_0}=(\phi_0)^*(s|_{M_0})\in\Spin(\Sigma_0)$ and
$s'_{\Sigma_0}=(\phi'_0)^*(s'|_{M_0'})\in\Spin(\Sigma_0)$.
Note that $s_{\Sigma_0}|_{\Sigma\setminus \opint(\phi^{-1}(B))} =
s'_{\Sigma_0}|_{\Sigma\setminus \opint(\phi^{-1}(B))}$.
Hence we have either
\begin{gather}
  \label{e3}
  s_{\Sigma_0}=s'_{\Sigma_0}
\end{gather}
or
\begin{gather}
  \label{e4}
  s'_{\Sigma_0}=s_{\Sigma_0}+[a]^!\quad \text{and}\quad
  s_{\Sigma_0}\neq s'_{\Sigma_0},
\end{gather}
where $a=c_n=\partial D^2\times \{1/2\}\subset A$ is the core of the
annulus $A$, and $[a]^!\in H^1(\Sigma_0;\Z_2)$ is the Poincar\'{e}
dual to $[a]\in H_1(\Sigma_0;\Z_2)$.

\begin{claim}
  We may assume \eqref{e3}.
\end{claim}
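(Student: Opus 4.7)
The plan is to handle \eqref{e4} by replacing $s$ with a twisted spin structure $s*T$, for a judicious closed orientable surface $T\subset M$, so as to reduce to \eqref{e3}. Two facts already established do the work: twisting along a closed surface preserves the restriction to $\partial M$ (Section \ref{sec:twist-spin-struct}), and by Proposition \ref{r6} it preserves the $Y$-equivalence class of the spin $3$-manifold. Hence, if $T$ can be chosen so that $(\phi_0)^*\bigl((s*T)|_{M_0}\bigr)=s_{\Sigma_0}+[a]^!=s'_{\Sigma_0}$, then $(M,\phi,s*T)$ is still a $\SsS$-bordered spin $3$-manifold, still $Y$-equivalent to $(M,\phi,s)$, and satisfies \eqref{e3}; the rest of the proof can then proceed under that cleaner hypothesis.

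To construct $T$, I would split it into a piece in $N$ and a piece in $M_0$ meeting along $c_n$. For the handle piece, take $D:=D^2\times\{1/2\}\subset N$, the cocore disk of $h_n$, whose boundary is exactly the core $a=c_n$ of the annulus $A$. For the $M_0$ piece, use the strong form of the preceding lemma, which asserts that $c_n$ is null-homologous \emph{over $\Z$} in the $\Z_2$-homology handlebody $V$ (not merely in $M$); this yields an orientable Seifert surface $F\subset V\subset M_0$ with $\partial F=c_n$. Orienting $F$ and $D$ coherently along $c_n$ and smoothing along the common boundary gives a closed, orientable, embedded surface $T:=F\cup D\subset M$, to which Proposition \ref{r6} applies.

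It remains to compute the effect of this twist on $\Sigma_0$. Since $T\cap M_0=F$ is properly embedded in $M_0$ with $\partial F=c_n\subset A\subset\partial M_0$, the class $[T]^!|_{M_0}\in H^1(M_0;\Z_2)$ coincides with the Poincar\'e--Lefschetz dual of $[F]\in H_2(M_0,\partial M_0;\Z_2)$. Naturality of Poincar\'e--Lefschetz duality identifies its image in $H^1(\partial M_0;\Z_2)$ with the Poincar\'e dual of $[\partial F]=[a]$, namely $[a]^!$; pulling back by $\phi_0$, which is the identity on the $A$-summand of $\Sigma_0$, then yields the required $[a]^!\in H^1(\Sigma_0;\Z_2)$. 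The only mildly delicate point is this duality identification; everything else is a routine assembly of the tools already built, and the assumption that $H_1(M;\Z)$ has no $2$-torsion enters only through the Seifert surface $F$ furnished by the lemma.
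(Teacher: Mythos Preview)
Your proof is correct and follows essentially the same strategy as the paper's: construct a closed orientable surface by capping a Seifert surface for $c_n$ with the cocore disk of $h_n$, then invoke Proposition~\ref{r6} to twist the spin structure without changing the $Y$-equivalence class, thereby reducing \eqref{e4} to \eqref{e3}. The only difference is cosmetic: you twist $s$ on $M$ using a Seifert surface $F\subset V\subset M_0$ furnished directly by the lemma, whereas the paper twists $s'$ on $M'$ using a Seifert surface $T_0'\subset M_0'$ (whose existence follows because $a$ is null-homologous in $M_0$ and $Y$-surgery preserves homology); the duality computation you spell out is exactly the one implicit in the paper's displayed chain of equalities for $\hat{s}'_{\Sigma_0}$.
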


\begin{proof}
  If $a$ is separating in $\Sigma_0$, then we have \eqref{e3}.

  Suppose that $a$ is non-separating in $\Sigma_0$, and that we have
  \eqref{e4}.  Since $a$ is null-homologous in $\partial V\subset
  M_0$, it is so also in $M_0'$.  Therefore, there is a connected,
  oriented surface $T_0'$ properly embedded in $M_0'$ such that
  $\partial T_0' = a$.  Set $D'=\Psi(D^2\times \{1/2\})$, and
  $T'=T_0'\cup D'$, which is a connected, oriented, closed surface in
  $M'$.

  Set $\hat{s}':=s'*T'\in\Spin(M')$ and 
  $\hat{s}'_{\Sigma_0}
  =(\phi'_0)^*(\hat{s}'|_{M_0'})\in\Spin(\Sigma_0)$.
   By Proposition \ref{r6}, it
  follows that $(M',s')$ and $(M',\hat{s}')$ are $Y$-equivalent.
  Thus, we may replace the spin manifold $(M',s')$ with $(M',\hat{s}')$.
  We have
  \begin{gather*}
    \hat{s}'_{\Sigma_0}
    =(\phi'_0)^*((s'*T')|_{M_0'})
    =(\phi'_0)^*(s')+[a]^!
    =s'_{\Sigma_0}+[a]^!
    =s_{\Sigma_0}.
  \end{gather*}
  Hence, we have only to consider the case where \eqref{e3} holds.
\end{proof}

We assume \eqref{e3}.  Set $s_0=s|_{M_0}\in\Spin(M_0)$ and
$s'_0=s'|_{M'_0}\in\Spin(M'_0)$.  Then $(M_0,\phi_0,s_0)$ and
$(M_0',\phi_0',s_0')$ are $(\Sigma_0,s_{\Sigma_0})$-bordered spin
$3$-manifolds.

We can use the induction hypothesis to deduce that $(M_0,\phi_0,s_0)$ and
$(M_0',\phi_0',s_0')$ are $Y$-equivalent, and hence so are $(M,\phi,s)$ and $(M',\phi',s')$.

\subsubsection{Case where $M$ is closed}
\label{sec:case-where-m-1}

This case is a special case of Theorem \ref{massuyeau}.

Alternatively, this case easily follows from the previous case by
considering the punctures $M\setminus \opint B^3$ and $M'\setminus \opint
B^3$.


\begin{thebibliography}{14}
\providecommand{\natexlab}[1]{#1}
\providecommand{\url}[1]{\texttt{#1}}
\expandafter\ifx\csname urlstyle\endcsname\relax
  \providecommand{\doi}[1]{doi: #1}\else
  \providecommand{\doi}{doi: \begingroup \urlstyle{rm}\Url}\fi



\bibitem{Goussarov:1999}
M.~Goussarov.
\newblock \emph{Finite type invariants and $n$-equivalence of
    $3$-manifolds}.
\newblock {Compt. Rend. Acad. Sci. Paris}  S\'erie I {\bf 329} (1999),
517--522.

\bibitem{Habiro:2000}
K.~Habiro.
\newblock \emph{{Claspers and finite type invariants of links}}.
\newblock {Geom. Topol.} {\bf 4} (2000), 1--83.

\bibitem{Habiro}
K.~Habiro.
\newblock{in preparation}.

\bibitem{Massuyeau:2003}
G.~Massuyeau.
\newblock \emph{{Spin Borromean surgeries}}.
\newblock {Trans. Amer. Math. Soc.} {\bf 10} (2003),  3991--4016.


\bibitem{Matveev:1986}
S.~V. Matveev.
\newblock \emph{{Generalized surgery of three-dimensional manifolds and representations of homology spheres}}.
\newblock  \emph{Mat. Zametki} {\bf 42} (1987), 268--278, 345


\end{thebibliography}
\end{document}